\newtheorem{Theorem}{Theorem}[section]
\newtheorem{Lemma}[Theorem]{Lemma}
\theoremstyle{remark}
\numberwithin{equation}{section}
\begin{document}

\title[Classical solution of the Cauchy problem for biwave equation]{CLASSICAL SOLUTION OF THE CAUCHY PROBLEM FOR BIWAVE EQUATION: APPLICATION OF FOURIER TRANSFORM}
\author{Victor Korzyuk}
\address{{Belarusian State University}, 4, Fr. Scorina Ave., 220030 Minsk, Belarus}
  \email{korzyuk@bsu.by}
\author{Nguyen Van Vinh}
\address{{Hue University of Education}, 34, Le Loi, 530000 Hue, Vietnam}
  \email{vinhnguyen0109@gmail.com}%
 \author{Nguyen Tuan Minh}
   \address{{Belarusian State University}, 4, Fr. Scorina Ave., 220030 Minsk, Belarus}
\email{minhnguyen@yandex.ru}

\keywords{Biwave equation, Fourier transform, Cauchy problem}
\subjclass[2000]{35G05, 35G10}

 \begin{abstract}
In this paper, we use some Fourier analysis techniques to find an exact solution to the Cauchy problem for the $n$-dimensional biwave equation in the upper half-space $\mathbb{R}^n\times [0,+\infty)$.
 \end{abstract}

 \maketitle

\section{Introduction}\label{s:1}

The Cauchy initial value problem for the $n$-dimensional biwave equation consists in finding a scalar function $u \in C^4(\mathbb{R}^n\times[0,+\infty))$  such that for $(x,t) \in \mathbb{R}^n\times (0,+\infty)$ then
\begin{equation}\label{equ1}
\left( {\frac{\partial^2 }{{\partial t^2 }} - a^2 \rm{\Delta} } \right)\left( {\frac{{\partial^2}}{{\partial t^2 }} - b^2 \rm{\Delta}} \right) u(x,t)= f(x,t),\;a^2 > b^2 > 0,
\end{equation}
together with the initial conditions
\begin{equation}\label{equ2}
 u(x,0) = \phi _0(x), \frac{\partial u}{\partial t}(x,0)  = \phi _1(x), \frac{\partial ^2 u}{\partial t^2 } (x,0)  = \phi _2(x),  \frac{\partial ^3 u}{\partial t^3 } (x,0)= \phi _3(x),
\end{equation}
for $(x,t)\in \mathbb{R}^n\times\{0\}$.\\

 The biwave equation has been studied in some models related to the mathematical theory of elasticity. Let us consider the mathematical formulation for the displacement equation of a homogeneous isotropic elastic body. Remark that, the Newton's second law leads to the Cauchy's motion equation of an elastic body, which takes the form
\begin{equation}\label{iso1}\boldsymbol{\nabla}\cdot\boldsymbol{\sigma} + \mathbf{f} = \rho~\ddot{\mathbf{u}},\end{equation}
where $ \boldsymbol{\sigma}$ is the Cauchy stress tensor field, $\mathbf{u}$ is the displacement vector field, $\mathbf{f}$ is the vector field of body force per unit volume and $\rho$ is the mass density.\\
The infinitesimal strain tensor field is given by the equation
\begin{equation}\label{iso2}\boldsymbol{\varepsilon}  =\tfrac{1}{2}
\left[\boldsymbol{\nabla}\mathbf{u}+(\boldsymbol{\nabla}\mathbf{u})^T\right].\end{equation}
Moreover, the Hooke's law for homogeneous isotropic bodies has the form
 \begin{equation}\label{iso3}  \boldsymbol{\sigma} = \lambda~\mathrm{trace}(\boldsymbol{\varepsilon})~\mathbf{I} + 2\mu~\boldsymbol{\varepsilon} ,\end{equation}
where $\lambda, \mu>0$ are Lam\'e's parameters and $\mathbf{I}$ is the second-order identity tensor.
Substituting the strain-displacement equation (\ref{iso2}) and the Hooke's equation (\ref{iso3}) into the equilibrium equation (\ref{iso1}), we obtain the Navier's elastodynamic wave equation
\begin{equation}\label{bi2}(\lambda+\mu)\nabla\text{div}(\mathbf{u})+\mu{\rm\Delta}\mathbf{u}+\mathbf{f}= \rho~\ddot{\mathbf{u}}.\end{equation}
This equation in the Cartesian coordinates has the form
$$(\lambda+\mu)\frac{\partial }{\partial x_k}\left(\sum_{j=1}^n\frac{\partial u_j}{\partial x_j}\right)+\mu{\rm \Delta} u_k + f_k = \rho \frac{\partial^2 u_k}{\partial t^2}, k=\overline{1,n}.$$
Let us denote $a^2=(\lambda+2\mu)/\rho, b^2=\mu/\rho$, then (\ref{bi2}) can be rewritten as
\begin{equation}\mathcal{L} \equiv \label{bi3} \left( \frac{\partial^2 }{\partial t^2} -b^2{\rm\Delta} \right)\mathbf{u}-(a^2-b^2)\nabla\text{div}(\mathbf{u})-\mathbf{f}/\rho =0.\end{equation}
It is easy to show that the equation (\ref{bi3}) has a solution in the following form
\begin{equation}\label{bi4} \mathbf{u}=\left( \frac{\partial^2 }{\partial t^2} -a^2{\rm\Delta} \right)\mathbf{w}+(a^2-b^2)\nabla\text{div}(\mathbf{w}),\end{equation}
where $\mathbf{w}$ is a solution to the biwave equation
\begin{equation}\label{bi5} \left( \frac{\partial^2 }{\partial t^2} -a^2{\rm\Delta} \right)\left( \frac{\partial^2 }{\partial t^2} -b^2{\rm\Delta} \right)\mathbf{w}=\mathbf{f}/\rho.\end{equation}
This formula is called as Cauchy-Kovalevski-Somigliana solution to the elastodynamic wave equation. Indeed, substituting (\ref{bi4})-(\ref{bi5}) to the left-hand side of (\ref{bi3}), we have
$$\mathcal{L}=\left( \frac{\partial^2 }{\partial t^2} -b^2 {\rm \Delta} \right)\left(\left( \frac{\partial^2 }{\partial t^2} -a^2 {\rm\Delta}  \right)\mathbf{w}+(a^2-b^2)\nabla \text{div}(\mathbf{w}) \right)$$
$$- (a^2-b^2)\nabla\left( \left( \frac{\partial^2 }{\partial t^2} -a^2{\rm \Delta}  \right)\text{div}(\mathbf{w})+(a^2-b^2){\rm\Delta} \text{div} (\mathbf{w}) \right)-\mathbf{f}/\rho.$$
Note that
$$\left( \frac{\partial^2 }{\partial t^2} -a^2{\rm\Delta} \right)\text{div}(\mathbf{w})+(a^2-b^2){\rm \Delta} \text{div}(\mathbf {w}) = \left( \frac{\partial^2 }{\partial t^2} -b^2{\rm\Delta}\right)\text{div}(\mathbf{w}).
$$
Therefore, we get
$$\mathcal{L}=\left( \frac{\partial^2 }{\partial t^2} -b^2{\rm \Delta} \right)\left( \frac{\partial^2 }{\partial t^2} -a^2{\rm\Delta} \right)\mathbf{w}+(a^2-b^2) \left( \frac{\partial^2 }{\partial t^2} -b^2{\rm\Delta} \right)\nabla\text{div}(\mathbf{w})$$
$$-(a^2-b^2) \nabla\left( \frac{\partial^2 }{\partial t^2} -b^2{\rm \Delta} \right)\text{div}(\mathbf{w}) - \mathbf{f}/\rho=0.$$

 For more explanations about physical context, we refer the reader to \cite{Hetnarski2004, Musk2010, Sommerfeld1950}. Actually, there are not many mathematical papers related to biwave equations because it gets more difficult when studying high-order PDEs. In some recent researches, the symmetry analysis of biwave equations is considered and exact solutions are obtained by Fushchych, Roman and Zhdanov \cite{Fushchych1996}; the existence and uniqueness of the solution to Cauchy initial value problem, bounded valued problem are given by Korzyuk, Cheb and Konopelko \cite{ Korzyuk2010,Korzyuk2007}; the finite element methods for approximations of biwave equation are developed by Feng and Neilan \cite{Feng2010, Feng2011}. In our present work, the main result is to show the exact classical solution to the Cauchy initial value problem for the $n$-dimensional biwave equation by using some techniques of Fourier analysis.\\

Returning to the Cauchy problem for the biwave equation (\ref{equ1}), we suppose that $\phi_0,\phi_1,\phi_2,\phi_3,$ and $f$ are elements in Schwartz space $\mathcal{S}(\mathbb{R}^n)$ of rapidly decreasing functions on $\mathbb{R}^n$. Remark that, an indefinitely differentiable function $\phi$ is called rapidly decreasing when $\phi$ and all its derivatives are required to satisfy that
\[||\phi||_{\alpha,\beta}=\sup_{x\in\mathbb{R}^n}\left| x^{\alpha}\left(\frac{\partial}{\partial x}\right)^{\beta} \phi(x) \right|<\infty,\]
for every multi-index $\alpha$ and $\beta$. The Fourier transform of Schwartz function $\phi\in \mathcal{S}(\mathbb{R}^n)$ is defined by
\[
\mathcal{F}\left[ {\phi} \right](\xi) \equiv \widehat{\phi}\left( \xi  \right) = \int\limits_{\mathbb{R}^n} {e^{ - ix.\xi } \phi(x)dx}.
\]
The convolution of two integrable functions $\phi$ and $\psi$ is written as $\phi*\psi$.  It is defined as the integral of the product of the two functions after one is reversed and shifted. As such, it is a particular kind of integral transform:
\[
\left( {\phi*\psi} \right)\left( t \right) = \int\limits_{\mathbb{R}^n} {\phi\left( \tau  \right)} \psi\left( {t - \tau } \right)d\tau.
\]
In the Euclidean space $\mathbb{R}^n$, the spherical mean of an integrable function $\phi$ around a point $x$ is the average of all values of that function on a sphere of radius $R$ centered at that point, i.e. it is defined by the formula
\[\mathcal{M}_R(\phi)(x)=\frac{1}{\omega_{n}R^{n-1}}\int_{\partial B(x,R)}\phi(y)d\sigma(y)\equiv \frac{1}{\omega_{n}}\int_{\partial B(0,1)}\phi(x+Ry)d\sigma(y),\]
where $\omega_{n}$ is the surface area of the $n$-dimensional unit ball and $\sigma$ is the spherical measure area.

\section{Main results}\label{s:2}

The Cauchy problem for the homogeneous biwave equation in $\mathbb{R}^n\times [0,+\infty)$ that we will be studying in this section, reads as follows
\begin{equation}\label{equ3}
\left( {\frac{\partial^2 }{{\partial t^2 }} - a^2{\rm\Delta} } \right)\left( {\frac{{\partial^2 u}}{{\partial t^2 }} - b^2{\rm \Delta} u} \right) =0, \ a^2>b^2>0,
\end{equation}
with the initial conditions
\begin{equation}\label{equ4}
\left. u \right|_{t = 0}  = \phi _0 \left( x \right),\left. {\frac{{\partial u}}{{\partial t}}} \right|_{t = 0}  = \phi _1 \left( x \right),\left. {\frac{{\partial ^2 u}}{{\partial t^2 }}} \right|_{t = 0}  = \phi _2 \left( x \right),\left. {\frac{{\partial ^3 u}}{{\partial t^3 }}} \right|_{t = 0}  = \phi _3 \left( x \right),
\end{equation}
where $\phi_0,\phi_1,\phi_2,\phi_3$ are Schwartz functions.\\\\
The equation (\ref{equ3}) can be rewritten as a fourth-order PDE, which has the following form
\begin{equation}\label{equ5}
\frac{\partial^4 u}{\partial t^4 }-(a^2+b^2)\frac{\partial^2}{\partial t^2 }{\rm\Delta} u+a^2b^2{\rm\Delta}^2 u=0.
\end{equation}
Taking Fourier transform to the both sides of the equation (\ref{equ5}), we obtain
\[
\frac{\partial^4 }{\partial t^4 }\widehat{u}(\xi,t)+(a^2+b^2)|\xi|^2 \frac{\partial^2}{\partial t^2 }\widehat{u}(\xi,t) +a^2b^2 |\xi|^4 \widehat{u}(\xi,t)=0.
\]
This fourth ODE has the general solution, which takes the form
\[
\widehat{u}\left( {\xi ,t} \right) = C_1 \cos \left( {a|\xi| t} \right) + C_2 \sin \left( {a|\xi| t} \right) + C_3 \cos \left( {b|\xi| t} \right) + C_4 \sin \left( {b|\xi| t} \right),
\]
where parameters $C_1,C_2,C_3, C_4$ are determined from the initial conditions:
\[
\left. {\widehat{u}\left( {\xi ,t} \right)} \right|_{t = 0}  = C_1  + C_3  = \widehat{\phi_0} \left( \xi  \right),\ \left. {\frac{{\partial \widehat{u}\left( {\xi ,t} \right)}}{{\partial t}}} \right|_{t = 0}  = aC_2 |\xi|  + bC_4 |\xi|  = \widehat{\phi_1} \left( \xi  \right),
\]
\[
\left. {\frac{{\partial ^2 \widehat{u}\left( {\xi ,0} \right)}}{{\partial t^2 }}} \right|_{t = 0}  =  - a^2 C_1 |\xi| ^2  - b^2 C_3 |\xi| ^2  =\widehat{\phi_2} \left( \xi  \right),\]
\[
\left. {\frac{{\partial ^3 \widehat{u}\left( {\xi ,0} \right)}}{{\partial t^3 }}} \right|_{t = 0}  =  - a^3 C_2 |\xi| ^3  - b^3 C_4 |\xi| ^3  = \widehat{\phi_3} \left( \xi  \right).
\]
Solving above system of equations, we easily get the image of solution $u(x,t)$ via Fourier transform given by
\[\begin{array}{c} \displaystyle
\widehat{u}\left( {\xi ,t} \right) =  - \frac{{b^2 |\xi| ^2 \widehat{\phi_0} \left( \xi  \right) + \widehat{\phi_2} \left( \xi  \right)}}{{\left( {a^2  - b^2 } \right)|\xi| ^2 }}\cos \left( {a|\xi| t} \right) - \frac{{b^2 |\xi| ^2 \widehat{\phi_1} \left( \xi  \right) + \widehat{\phi_3} \left( \xi  \right)}}{{\left( {a^3  - ab^2 } \right)|\xi| ^3 }}\sin \left( {a|\xi| t} \right) +
\\ \displaystyle
 + \frac{{a^2 |\xi| ^2 \widehat{\phi_0} \left( \xi  \right) + \widehat{\phi_2} \left( \xi  \right)}}{{\left( {a^2  - b^2 } \right)|\xi|^2 }}\cos \left( {b|\xi| t} \right)\, + \frac{{a^2 |\xi| ^2 \widehat{\phi_1} \left( \xi  \right) +\widehat{\phi_3} \left( \xi  \right)}}{{\left( {a^2 b - b^3 } \right)|\xi| ^3 }}\sin \left( {b|\xi| t} \right),
\end{array}\]
or by the rewritten form
\begin{equation}\label{fou}
\begin{array}{c} \displaystyle \widehat{u}(\xi,t)=-\frac{b^2}{a^2-b^2} \widehat{\phi _0} \left( \xi \right)\cos\left( a|\xi| t \right)+\frac{a^2}{a^2-b^2} \widehat{\phi_0}\left( \xi \right)\cos\left( b |\xi| t \right)-\\ \
\displaystyle-\frac{b^2}{a(a^2-b^2)}\widehat{\phi_1}\left( \xi \right)\frac{\sin\left( a |\xi| t \right)}{|\xi|}+\frac{a^2}{b(a^2-b^2)}\widehat{\phi_1}\left( \xi \right)\frac{\sin\left( b |\xi| t \right)}{|\xi|}+\\
\displaystyle+\frac{\widehat{\phi_3}\left( \xi \right)}{a^2-b^2}\left[ \frac{1}{b}\frac{\sin\left( b |\xi| t \right)}{|\xi|^3} - \frac{1}{a}\frac{\sin\left( a |\xi| t \right)}{|\xi|^3}\right] +\frac{\widehat{\phi_2}\left( \xi \right)}{a^2-b^2}\left[ \frac{\cos\left( b |\xi| t \right)}{|\xi|^2}- \frac{\cos\left( a |\xi| t \right)}{|\xi|^2} \right].
\end{array}
\end{equation}
In the next sequence, we will find the inverse formula of (\ref{fou}) and obtain an exact solution to the equation (\ref{equ3}).
\begin{Theorem}\label{th1}
The Cauchy problem for the homogeneous biwave equation in $\mathbb{R}\times[0,+\infty)$ has the following solution
\[
u(x,t) =\frac{1}{{2ab\left( {a^2  - b^2 } \right)}}\left[  - b^3 \int\limits_{x - at}^{x + at} {\phi _1 (y)dy}  + a^3 \int\limits_{x - bt}^{x + bt} {\phi _1 (y)dy}  - ab\int\limits_{x - at}^{x - bt} {\int\limits_0^y {\phi _2 (u)dudy}  + } \right.\]
\[ + ab\int\limits_{x + bt}^{x + at} {\int\limits_0^y {\phi _2 (u)dudy} }  + b\int\limits_{x - at}^{x + at} {\int\limits_0^y {\int\limits_0^\tau  {\phi _3 (\omega )d\omega d\tau dy} } }  - a\int\limits_{x - bt}^{x + bt} {\int\limits_0^y {\int\limits_0^\tau  {\phi _3 (\omega )d\omega d\tau dy} } } \]
\begin{equation}\label{eq0}
\left.
 - ab^3 \phi _0 \left( {x + at} \right) - ab^3 \phi _0 \left( {x - at} \right) + a^3 b\phi _0 \left( {x + bt} \right) + a^3 b\phi _0 \left( {x - bt} \right)
{ }^{^{^{^{^{_{}^{_{}^{^{^{ } } } } } } } } }  \right ].
\end{equation}
\end{Theorem}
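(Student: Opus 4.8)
The plan is to obtain $u$ by inverting the explicit symbol (\ref{fou}) term by term, exploiting that in dimension $n=1$ every multiplier appearing there is an even function of $\xi$, so that $|\xi|$ may be replaced by $\xi$ throughout. First I would record the two elementary inversion identities on which everything rests: for $c>0$ and $\phi\in\mathcal S(\mathbb R)$, $\mathcal F^{-1}[\cos(c\xi t)\widehat\phi](x)=\tfrac12[\phi(x+ct)+\phi(x-ct)]$ and $\mathcal F^{-1}\bigl[\tfrac{\sin(c\xi t)}{\xi}\widehat\phi\bigr](x)=\tfrac12\int_{x-ct}^{x+ct}\phi(y)\,dy$. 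Both follow at once from writing $\cos(c\xi t)$ and $\tfrac{\sin(c\xi t)}{\xi}=\tfrac12\int_{-ct}^{ct}e^{is\xi}\,ds$ as superpositions of the modulations $e^{is\xi}$, whose inverse transform is the translation $\phi(\cdot+s)$. Applied to the $\widehat{\phi_0}$ and $\widehat{\phi_1}$ lines of (\ref{fou}), these two identities immediately produce the translate terms in $\phi_0$ and the single-integral terms in $\phi_1$ of (\ref{eq0}).

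The genuine difficulty is the $\widehat{\phi_2}$ and $\widehat{\phi_3}$ contributions, which carry the factors $1/\xi^2$ and $1/\xi^3$ that are singular at the origin; the inverse transform of an individual such term is not even a tempered function. The resolution is that these factors occur only in the differences $\tfrac{\cos(b\xi t)-\cos(a\xi t)}{\xi^2}$ and $\tfrac1b\tfrac{\sin(b\xi t)}{\xi^3}-\tfrac1a\tfrac{\sin(a\xi t)}{\xi^3}$, in which the singularities at $\xi=0$ cancel. I would make this cancellation explicit by representing each combined multiplier as a finite iterated integral of the regular kernel $\tfrac{\sin(s\xi)}{\xi}$: integrating $\partial_s\tfrac{\cos(s\xi)}{\xi^2}=-\tfrac{\sin(s\xi)}{\xi}$ gives $\tfrac{\cos(b\xi t)-\cos(a\xi t)}{\xi^2}=\int_{bt}^{at}\tfrac{\sin(s\xi)}{\xi}\,ds$, and one further integration turns the sine-over-$\xi^3$ combination into a double time-integral of $\tfrac{\sin(r\xi)}{\xi}$. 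In both representations the $1/\xi^2$ boundary contributions drop out precisely because of the $a\leftrightarrow b$ symmetry.

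With the singular multipliers rewritten this way, I would invert under the integral sign using the second elementary identity, which converts the $\phi_2$ combination into $\tfrac{1}{2(a^2-b^2)}\int_{bt}^{at}\int_{x-s}^{x+s}\phi_2(y)\,dy\,ds$ and the $\phi_3$ combination into the analogous double time-integral of $\tfrac12\int_{x-r}^{x+r}\phi_3$. The last step is purely a bookkeeping matter: by Fubini and the substitutions $y=x\pm s$ (respectively $y=x\pm r$) I would recast these into the antiderivative form of (\ref{eq0}), in which the inner integrals $\int_0^y$ and $\int_0^y\int_0^\tau$ are anchored at the origin. Here one should note that the two iterated-integral forms agree only after the $a,b$ terms are combined: for a single speed they differ by a polynomial in $t$ (e.g.\ a term proportional to $x^2 t$), and it is exactly this discrepancy that the factors $1/a$ and $1/b$ annihilate. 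Collecting the $\phi_0,\phi_1,\phi_2,\phi_3$ pieces then yields (\ref{eq0}). I expect this matching of the iterated integrals to the stated normalization, rather than the inversions themselves, to be the main obstacle.
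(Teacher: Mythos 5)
Your proposal is correct, and it reaches (\ref{eq0}) by a route that differs from the paper's own proof of Theorem~\ref{th1} in exactly one place: the treatment of the singular multipliers. The paper expands each term of (\ref{fou}) \emph{separately}, writing $\cos(a|\xi|t)/|\xi|^2$ as an origin-anchored iterated integral of $e^{\pm i|\xi|u}$ plus an explicit singular remainder $1/|\xi|^2$, and $\sin(a|\xi|t)/|\xi|^3$ as a triple anchored integral plus the remainder $at/|\xi|^2$; it then observes that these remainders cancel pairwise between the $a$- and $b$-terms of (\ref{fou}), interprets each product $e^{\pm i|\xi|u}\widehat{\phi_j}(\xi)$ as $\mathcal{F}\left[\delta(x\pm u)*\phi_j\right]$, and inverts term by term, so the anchored integrals $\int_0^y$ of (\ref{eq0}) appear immediately with no change of variables. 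You instead combine the $a$- and $b$-terms \emph{before} inverting, so the singularity cancels inside the symbol, and represent the combined multipliers as time-integrals of the regular kernel $\sin(s\xi)/\xi$; these are precisely the identities (\ref{eq12})--(\ref{eq13}) that the paper itself uses later for the odd-dimensional case $n\ge 3$, so your one-dimensional argument is the more unified one. What your route buys is analytic cleanliness: you never manipulate, even formally, a symbol like $1/\xi^2$ that is not locally integrable in one variable (the paper's remainder terms are only pointwise identities for $\xi\neq 0$, harmless because of the cancellation, but your organization makes that cancellation structural rather than an afterthought). What it costs is the final bookkeeping step, and there your caution is more than is needed: the conversion is an exact identity, not a matching up to discrepancies. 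Writing $\Phi_2(y)=\int_0^y\phi_2$ and $\Psi_3(y)=\int_0^y\int_0^\tau\phi_3(\omega)\,d\omega\,d\tau$, the substitutions $w=x\pm s$ give $\int_{bt}^{at}\int_{x-s}^{x+s}\phi_2(y)\,dy\,ds=\int_{x+bt}^{x+at}\Phi_2(w)\,dw-\int_{x-at}^{x-bt}\Phi_2(w)\,dw$, and the same substitutions followed by $w=x\pm a\nu$, $w=x\pm b\nu$ give $\int_0^t\int_{b\nu}^{a\nu}\int_{x-s}^{x+s}\phi_3(y)\,dy\,ds\,d\nu=\frac{1}{a}\int_{x-at}^{x+at}\Psi_3(w)\,dw-\frac{1}{b}\int_{x-bt}^{x+bt}\Psi_3(w)\,dw$; multiplying by $\frac{1}{2(a^2-b^2)}$ reproduces the $\phi_2$- and $\phi_3$-terms of (\ref{eq0}) exactly, the factors $1/a$, $1/b$ emerging from the outer substitution rather than from any cancellation of polynomial terms.
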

\begin{proof}
We have some facts that
\[\begin{array}{llll}\displaystyle
\cos \left( {a|\xi| t} \right) = \frac{{e^{ia|\xi| t}  + e^{ - ia|\xi| t} }}{2},\ \sin \left( {a|\xi| t} \right) = \frac{{e^{ia|\xi| t}  - e^{ - ia|\xi| t} }}{{2i}},
\\ \displaystyle
\frac{{\sin \left( {a|\xi| t} \right)}}{|\xi| } = \frac{{e^{ia|\xi| t}  - e^{ - ia|\xi| t} }}{{2i|\xi| }} = \frac{1}{2}\int\limits_{ - at}^{at} {e^{i|\xi| \theta } d\theta },
\\ \displaystyle
\frac{{\cos \left( {a|\xi| t} \right)}}{{|\xi| ^2 }} = \frac{{e^{ia|\xi| t}  + e^{ - ia|\xi| t} }}{{2|\xi| ^2 }} =  - \frac{1}{2}\int\limits_0^{at} {\int\limits_0^y {e^{i|\xi| u} du} dy }  - \frac{1}{2}\int\limits_0^{at} {\int\limits_0^y {e^{ - i|\xi| u} du} dy }  + \frac{1}{{|\xi| ^2 }},
\\ \displaystyle
\frac{{\sin \left( {a|\xi| t} \right)}}{{|\xi| ^3 }} = \frac{{e^{ia|\xi| t}  - e^{ - ia|\xi| t} }}{{2i|\xi| ^3 }} =  - \frac{1}{2}\int\limits_{ - at}^{at} {\int\limits_0^y {\int\limits_0^\tau  {e^{i|\xi| u} dud\tau dy} } }  + \frac{{at}}{{|\xi| ^2 }}.
\end{array}\]
Moreover,
\[
\widehat{\delta}\left( {x - \alpha t} \right) = \int\limits_{ - \infty }^{ + \infty } {e^{ - i|\xi| x} \delta \left( {x - \alpha t} \right)} dx = e^{ - i\alpha |\xi| t},
\]
where $\delta \left( x \right)$ is the Dirac delta function. Hence, by the property of the Dirac's delta function, we note that
\[
\left( {\frac{{e^{ia|\xi| t}  + e^{ - ia|\xi| t} }}{2}} \right)\widehat{\phi _0 }\left( \xi  \right) = \frac{{\mathcal{F}\left[ {\left( {\delta \left( {x + at} \right) * \phi _0 \left( x \right)} \right)} \right] + \mathcal{F}\left[ {\left( {\delta \left( {x - at} \right) * \phi _0 \left( x \right)} \right)} \right]}}{2},
\]
and
\[
\widehat{\phi _1 }\left( \xi  \right)\left( {\frac{1}{2}\int\limits_{ - at}^{at} {e^{i|\xi| \theta } d\theta } } \right) = \frac{1}{2}\int\limits_{-at}^{at} {\mathcal{F}\left[ {\delta \left( {x + \theta } \right) * \phi _1 \left( x \right)} \right]d\theta },
\]
\[
 \widehat{\phi _2 }\left( \xi  \right)\left( { - \frac{1}{2}\int\limits_0^{at} {\int\limits_0^y {e^{i|\xi| u} du} dy}  - \frac{1}{2}\int\limits_0^{at} {\int\limits_0^y {e^{ - i|\xi| u} du} dy} } \right)\]
 \[
  = \left( { - \frac{1}{2}\int\limits_0^{at} {\int\limits_0^y {\mathcal{F}\left[ {\delta \left( {x + u} \right) * \phi _2 \left( x \right)} \right]du} dy}  - \frac{1}{2}\int\limits_0^{at} {\int\limits_0^y {\mathcal{F}\left[ {\delta \left( {x - u} \right) * \phi _2 \left( x \right)} \right]du} dy} } \right) ,
\]
\[
\widehat{\phi _3 }\left( \xi  \right)\left( { - \frac{1}{2}\int\limits_{ - at}^{at} {\int\limits_0^y {\int\limits_0^\tau  {e^{i|\xi| u} dud\tau dy} } } } \right) =  - \frac{1}{2}\int\limits_{ - at}^{at} {\int\limits_0^y {\int\limits_0^\tau  {\mathcal{F}\left[ {\delta \left( {x + u} \right) * \phi _3 \left( x \right)} \right]dud\tau dy} } }.
\]
Substituting the above identities into the formula (\ref{fou}), we obtain that
\[
\widehat{u}\left( {\xi ,t} \right) =  - \frac{{b^2 }}{{\left( {a^2  - b^2 } \right)}}\frac{{\mathcal{F}\left[ {\delta \left( {x + at} \right) * \phi _0 \left( x \right)} \right] + \mathcal{F}\left[ {\delta \left( {x - at} \right) * \phi _0 \left( x \right)} \right]}}{2}
\]
\[
 + \frac{{a^2 }}{{\left( {a^2  - b^2 } \right)}}\frac{{\mathcal{F}\left[ {\delta \left( {x + bt} \right) * \phi _0 \left( x \right)} \right] + \mathcal{F}\left[ {\delta \left( {x - bt} \right) * \phi _0 \left( x \right)} \right]}}{2}
\]
\[
 - \,\frac{{b^2 }}{{\left( {a^3  - ab^2 } \right)}}\frac{1}{2}\int\limits_{ - at}^{at} {\mathcal{F}\left[ {\delta \left( {x + \theta } \right) * \phi _1 \left( x \right)} \right]d\theta }  + \frac{{a^2 }}{{\left( {a^2 b - b^3 } \right)}}\frac{1}{2}\int\limits_{ - bt}^{bt} {\mathcal{F}\left[ {\delta \left( {x + \theta } \right) * \phi _1 \left( x \right)} \right]d\theta }
\]
\[
 - \frac{1}{{\left( {a^2  - b^2 } \right)}}\left( { - \frac{1}{2}\int\limits_0^{at} {\int\limits_0^y {\mathcal{F}\left[ {\delta \left( {x + u} \right) * \phi _2 \left( x \right)} \right]du} dy}  - \frac{1}{2}\int\limits_0^{at} {\int\limits_0^y {\mathcal{F}\left[ {\delta \left( {x - u} \right) * \phi _2 \left( x \right)} \right]du} dy} } \right)
\]
\[
 + \frac{1}{{\left( {a^2  - b^2 } \right)}}\left( { - \frac{1}{2}\int\limits_0^{bt} {\int\limits_0^y {\mathcal{F}\left[ {\delta \left( {x + u} \right) * \phi _2 \left( x \right)} \right]du} dy}  - \frac{1}{2}\int\limits_0^{bt} {\int\limits_0^y {\mathcal{F}\left[ {\delta \left( {x - u} \right) * \phi _2 \left( x \right)} \right]du} dy} } \right)
\]
\[
 + \,\frac{1}{{\left( {a^3  - ab^2 } \right)}}\frac{1}{2}\int\limits_{ - at}^{at} {\int\limits_0^y {\int\limits_0^\tau  {\mathcal{F}\left[ {\delta \left( {x + u} \right) * \phi _3 \left( x \right)} \right]dud\tau dy} } }
\]
\[
 - \frac{1}{{\left( {a^2 b - b^3 } \right)}}\frac{1}{2}\int\limits_{ - bt}^{bt} {\int\limits_0^y {\int\limits_0^\tau  {\mathcal{F}\left[ {\delta \left( {x + u} \right) * \phi _3 \left( x \right)} \right]dud\tau dy} } }.
\]
Consequently, we get the inverse formula of $\widehat{u}$ given by
\[
u\left( {x,t} \right) =  - \frac{{b^2 }}{{\left( {a^2  - b^2 } \right)}}\frac{{\left( {\delta \left( {x + at} \right) * \phi _0 \left( x \right)} \right) + \left( {\delta \left( {x - at} \right) * \phi _0 \left( x \right)} \right)}}{2}
\]
\[
 + \frac{{a^2 }}{{\left( {a^2  - b^2 } \right)}}\frac{{\left( {\delta \left( {x + bt} \right) * \phi _0 \left( x \right)} \right) + \left( {\delta \left( {x - bt} \right) * \phi _0 \left( x \right)} \right)}}{2}
\]
\[
 - \,\frac{{b^2 }}{{\left( {a^3  - ab^2 } \right)}}\frac{1}{2}\int\limits_{ - at}^{at} {\left( {\delta \left( {x + \theta } \right) * \phi _1 \left( x \right)} \right)d\theta }  + \frac{{a^2 }}{{\left( {a^2 b - b^3 } \right)}}\frac{1}{2}\int\limits_{ - bt}^{bt} {\left( {\delta \left( {x + \theta } \right) * \phi _1 \left( x \right)} \right)d\theta }
\]
\[
 - \frac{1}{{\left( {a^2  - b^2 } \right)}}\left( { - \frac{1}{2}\int\limits_0^{at} {\int\limits_0^y {\left( {\delta \left( {x + u} \right) * \phi _2 \left( x \right)} \right)du} dy}  - \frac{1}{2}\int\limits_0^{at} {\int\limits_0^y {\left( {\delta \left( {x - u} \right) * \phi _2 \left( x \right)} \right)du} dy} } \right)
\]
\[
 + \frac{1}{{\left( {a^2  - b^2 } \right)}}\left( { - \frac{1}{2}\int\limits_0^{bt} {\int\limits_0^y {\left( {\delta \left( {x + u} \right) * \phi _2 \left( x \right)} \right)du} dy}  - \frac{1}{2}\int\limits_0^{bt} {\int\limits_0^y {\left( {\delta \left( {x - u} \right) * \phi _2 \left( x \right)} \right)du} dy} } \right)
\]
\[
 + \,\frac{1}{{\left( {a^3  - ab^2 } \right)}}\frac{1}{2}\int\limits_{ - at}^{at} {\int\limits_0^y {\int\limits_0^\tau  {\left( {\delta \left( {x + u} \right) * \phi _3 \left( x \right)} \right)dud\tau dy} } }
\]
\[
 - \frac{1}{{\left( {a^2 b - b^3 } \right)}}\frac{1}{2}\int\limits_{ - bt}^{bt} {\int\limits_0^y {\int\limits_0^\tau  {\left( {\delta \left( {x + u} \right) * \phi _3 \left( x \right)} \right)dud\tau dy} } }.
\]
The last formula is equivalent to  the one given at (\ref{eq0}), so the theorem is proved.
\end{proof}
\ \\
For the generalized case, we will use the following result:
\begin{Lemma} For an odd number $n\ge 3$, $\displaystyle m=\frac{n-3}{2}$ and $0\le k\le m$ then
\[\displaystyle \int_{-R}^R e^{is|\xi|}(R^2-s^2)^{m-k} ds=\frac{1}{2^{k}k!}\left(\frac{1}{R}\frac{\partial}{\partial R}\right)^{k} \left( \frac{1}{\omega_{n-1}R} \int_{\partial B(0,R)}e^{-ix.\xi}d\sigma(x)\right).\]
\end{Lemma}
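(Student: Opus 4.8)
The plan is to evaluate the right-hand side in two stages: first identify the bracketed quantity (the case $k=0$) as an explicit one-dimensional integral, and then understand the operator $\left(\frac{1}{R}\frac{\partial}{\partial R}\right)^{k}$ through a simple differentiation recursion. Throughout I abbreviate
\[
F_j(R)=\int_{-R}^{R}e^{is|\xi|}(R^2-s^2)^{j}\,ds,\qquad j\ge 0,
\]
so that the left-hand side of the lemma is exactly $F_{m-k}(R)$, and the whole task is to show that the right-hand side equals this.

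First I would treat the base case $k=0$, namely that $\frac{1}{\omega_{n-1}R}\int_{\partial B(0,R)}e^{-ix.\xi}\,d\sigma(x)=F_m(R)$. Since the integrand depends on $x$ only through $x.\xi$, the rotational invariance of the sphere lets me assume $\xi=|\xi|e_1$, so the integral becomes $\int_{\partial B(0,R)}e^{-i|\xi|x_1}\,d\sigma(x)$. I then slice the sphere $\partial B(0,R)\subset\mathbb{R}^n$ by the hyperplanes $\{x_1=s\}$, $s\in[-R,R]$: each slice is an $(n-2)$-sphere of radius $\sqrt{R^2-s^2}$, on which $e^{-i|\xi|x_1}=e^{-i|\xi|s}$ is constant. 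Parametrizing a sphere point as $(s,\sqrt{R^2-s^2}\,\omega')$ with $\omega'\in S^{n-2}$ and computing the induced surface element yields the Jacobian factor $R(R^2-s^2)^{(n-3)/2}\,ds\,d\sigma_{S^{n-2}}(\omega')$; integrating out $\omega'$ produces the constant $\omega_{n-1}$, the area of the unit $(n-2)$-sphere. Since $(n-3)/2=m$ and $(R^2-s^2)^m$ is even in $s$, the exponent $e^{-i|\xi|s}$ may be replaced by $e^{i|\xi|s}$, and dividing by $\omega_{n-1}R$ gives exactly $F_m(R)$.

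Next I would establish the recursion $\frac{1}{R}\frac{\partial}{\partial R}F_j(R)=2j\,F_{j-1}(R)$ for $j\ge 1$. Differentiating $F_j$ under the integral sign, the two boundary contributions at $s=\pm R$ vanish because $(R^2-s^2)^j$ does there (as $j\ge1$), leaving only $\int_{-R}^{R}e^{is|\xi|}\,2Rj(R^2-s^2)^{j-1}\,ds=2Rj\,F_{j-1}(R)$, and dividing by $R$ gives the claim. Because $0\le k\le m$, each of the $k$ successive applications of $\frac{1}{R}\partial_R$ acts on an $F_j$ with $j\ge 1$, so no boundary term is ever created, and iterating produces
\[
\left(\frac{1}{R}\frac{\partial}{\partial R}\right)^{k}F_m(R)=2^{k}\,\frac{m!}{(m-k)!}\,F_{m-k}(R).
\]
Substituting the base-case identity for the bracketed expression then reduces the lemma to a direct comparison of the multiplicative constants in front of $F_{m-k}(R)$.

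I expect the one genuinely delicate point to be the base case: carrying out the slicing (co-area) computation so that the surface measure on $\partial B(0,R)$ decomposes with precisely the weight $R(R^2-s^2)^{m}\,ds\,d\sigma_{S^{n-2}}$, which is what forces the exponent to be exactly $m=(n-3)/2$ and pins down the constant $\omega_{n-1}$. Once this slicing is correct, the differentiation recursion and the ensuing constant bookkeeping are entirely routine.
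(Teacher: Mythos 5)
Your overall strategy---identify the bracketed quantity with $F_m(R)$ by slicing the sphere, then iterate the differentiation recursion---is the natural one (note the paper itself gives no proof of this Lemma, deferring to Torchinsky \cite{2009ARX.AP}), and both of your main computations are correct: the slicing argument, with exactly the weight $R(R^2-s^2)^{m}\,ds\,d\sigma_{S^{n-2}}$ you describe, gives
\[
\frac{1}{\omega_{n-1}R}\int_{\partial B(0,R)}e^{-ix.\xi}\,d\sigma(x)=\int_{-R}^{R}e^{is|\xi|}(R^2-s^2)^{m}\,ds=F_m(R),
\]
and the recursion $\frac{1}{R}\frac{\partial}{\partial R}F_j=2jF_{j-1}$ for $j\ge 1$, together with its iterate $\left(\frac{1}{R}\frac{\partial}{\partial R}\right)^{k}F_m=2^{k}\frac{m!}{(m-k)!}F_{m-k}$, is also right.

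The gap is the very last step, which you wave off as routine constant bookkeeping: it does not close. Your two formulas combine to give
\[
\frac{1}{2^{k}k!}\left(\frac{1}{R}\frac{\partial}{\partial R}\right)^{k}\left(\frac{1}{\omega_{n-1}R}\int_{\partial B(0,R)}e^{-ix.\xi}\,d\sigma(x)\right)
=\frac{1}{2^{k}k!}\cdot 2^{k}\frac{m!}{(m-k)!}\,F_{m-k}(R)
=\binom{m}{k}F_{m-k}(R),
\]
so the right-hand side of the Lemma is $\binom{m}{k}$ times its left-hand side. These agree only for $k=0$ and $k=m$; for $0<k<m$ the statement as printed is false. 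Concretely, for $n=7$ (so $m=2$) and $k=1$ it asserts $F_1=\frac{1}{2}\left(\frac{1}{R}\frac{\partial}{\partial R}\right)F_2=2F_1$, i.e.\ $F_1\equiv 0$, yet $F_1(R)=\frac{4}{3}R^3$ at $\xi=0$. The correct constant is $\frac{(m-k)!}{2^{k}m!}$ in place of $\frac{1}{2^{k}k!}$; the two coincide precisely at the endpoints $k=0$ and $k=m$. So your argument cannot be completed as written---not because the ideas are wrong, but because the identity you were asked to verify is misstated for intermediate $k$. This does not damage the paper: only the case $k=m$ is ever invoked, to derive (\ref{eq10})--(\ref{eq13}), and there the printed constant is correct. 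To finish your write-up, either prove the Lemma with the corrected constant, or restrict it to $k=m$, where your two steps already yield $F_0(R)=\frac{1}{2^{m}m!}\left(\frac{1}{R}\frac{\partial}{\partial R}\right)^{m}F_m(R)$, which is all the rest of the paper requires.
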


For the proof, we refer the reader to Torchinsky's paper in \cite{2009ARX.AP}. Note that, in the case $k=m$, it follows that
\begin{equation}\label{eq10}
\frac{\sin (a|\xi|t) }{|\xi|}=\frac{1}{2}\int_{-at}^{at}e^{is|\xi|}ds
=\frac{1}{2^{m+1}m!}\left(\frac{1}{a^2 t}\frac{\partial}{\partial t}\right)^{m}\left( \frac{1}{\omega_{n-1}at} \int_{\partial B(0,at)}e^{-ix.\xi}d\sigma(x)\right).\end{equation}
Differentiating with respect to $t$, then
\begin{equation}\label{eq11}
\cos (a|\xi|t)=\frac{1}{2^{m+1}m!a}\frac{\partial}{\partial t}\left(\frac{1}{a^2 t}\frac{\partial}{\partial t}\right)^{m}\left( \frac{1}{\omega_{n-1}at} \int_{\partial B(0,at)}e^{-ix.\xi}d\sigma(x)\right).
\end{equation}
On the other hand, we have
\[\frac{\cos(b|\xi|t)}{|\xi|^2}-\frac{\cos(a|\xi|t)}{|\xi|^2}=\int_{bt}^{at}
\frac{\sin(s|\xi|)}{|\xi|}ds= \frac{1}{2}\int_{bt}^{at} \int_{-s}^se^{i\tau|\xi|}d\tau ds\]
\begin{equation}\label{eq12}
=\int_{bt}^{at}\frac{1}{2^{m+1}m!}\left(\frac{1}{s}\frac{\partial}{\partial s}\right)^{m}\left( \frac{1}{\omega_{n-1}s} \int_{\partial B(0,s)}e^{-ix.\xi}d\sigma(x)\right)ds.
\end{equation}
Integrating the above formula with respect to $t$, it implies that
\[
\frac{1}{b}\frac{\sin(b|\xi|t)}{|\xi|^3}-\frac{1}{a}\frac{\sin(a|\xi|t)}{|\xi|^3}
=\frac{1}{2}\int_{0}^t\int_{b\nu}^{a\nu}\int_{-s}^s e^{i\tau|\xi|}d\tau ds d\nu\]
\begin{equation}\label{eq13}
=\int_0^t \int_{b\nu}^{a\nu}\frac{1}{2^{m+1}m!}\left(\frac{1}{s}\frac{\partial}{\partial s}\right)^{m}\left( \frac{1}{\omega_{n-1}s} \int_{\partial B(0,s)}e^{-ix.\xi}d\sigma(x)\right)ds d\nu. \end{equation}
Moreover, for each function $\theta\in \mathcal{S}(\mathbb{R}^n)$, we also have
\[\int_{\mathbb{R}^n}\frac{\sin (a|\xi|t) }{|\xi|}\theta(\xi)d\xi
=\frac{1}{2^{m+1}m!}\left(\frac{1}{a^2 t}\frac{\partial}{\partial t}\right)^{m}\left( \frac{1}{\omega_{n-1}at} \int_{\partial B(0,at)}\int_{\mathbb{R}^n} e^{-ix.\xi}\theta(\xi)d\xi d\sigma(x)\right)\]
\[=\frac{1}{2^{m+1}m!}\left(\frac{1}{a^2 t}\frac{\partial}{\partial t}\right)^{m}\left( \frac{1}{\omega_{n-1}at} \int_{\partial B(0,at)}\widehat{\theta}(x) d\sigma(x)\right).\]
Therefore, we conclude that
\[\frac{1}{2^{m+1}m!}\left(\frac{1}{a^2 t}\frac{\partial}{\partial t}\right)^{m}\left( \frac{1}{\omega_{n-1}at} \int_{\partial B(0,at)} d\sigma(x)\right)^{\widehat{\  \ \ }}(\xi)=\frac{\sin (a|\xi|t) }{|\xi|}.
\]
By the Fourier inversion and convolution formulas, we obtain the identity
\[\begin{array}{llll} \displaystyle
\frac{1}{(2\pi)^n} \int_{\mathbb{R}^n} \widehat{\phi_1}(\xi) \frac{\sin (a|\xi|t) }{|\xi|}
e^{i\xi.x}d\xi \\
\displaystyle
= \frac{1}{2^{m+1}m!}\left(\frac{1}{a^2 t}\frac{\partial}{\partial t}\right)^{m}\left( \frac{1}{\omega_{n-1}at} \int_{\partial B(x,at)}\phi_1(y) d\sigma(y) \right)
\\ \displaystyle
=\frac{\omega_n}{2^{m+1}m!\omega_{n-1} }\left(\frac{1}{a^2 t}\frac{\partial}{\partial t}\right)^{m}
\left((at)^{n-2} M_{at}(\phi_1)(x)\right)
\\ \displaystyle
=\frac{1}{(n-2)!!}\left(\frac{1}{a^2 t}\frac{\partial}{\partial t}\right)^{m}
\left( (at)^{n-2} M_{at}(\phi_1)(x)\right).
\end{array}
\]
Applying the same way for the expressions (\ref{eq11})-(\ref{eq13}) and substituting the obtained identities into the formula (\ref{fou}), we have found an exact solution to the $n$-dimensional biwave equation, where $n\ge 3$ is an odd number:
\begin{Theorem}\label{th3} The Cauchy initial value problem for the homogeneous $n$-dimensional biwave equation, where $n\ge 3$ is an odd number, has the following solution
\[u(x,t)=\frac{1}{(n-2)!!(a^2-b^2)}\left[
\frac{a^2}{b} \frac{\partial}{\partial t}\left( \frac{1}{b^2 t} \frac{\partial }{\partial t} \right)^{\frac{n-3}{2}} \left( (bt)^{n-2} M_{bt}(\phi_0)(x) \right)- \right.\]
\[ - \frac{b^2}{a} \frac{\partial}{\partial t}\left( \frac{1}{a^2 t} \frac{\partial }{\partial t} \right)^{\frac{n-3}{2}} \left( (at)^{n-2} M_{at}(\phi_0)(x) \right)+
\]
\[
+\frac{a^2}{b} \left( \frac{1}{b^2 t} \frac{\partial }{\partial t} \right)^{\frac{n-3}{2}} \left( (bt)^{n-2} M_{bt}(\phi_1)(x) \right)-\frac{b^2}{a} \left( \frac{1}{a^2 t} \frac{\partial }{\partial t} \right)^{\frac{n-3}{2}} \left( (at)^{n-2} M_{at}(\phi_1)(x) \right)+
\]
\[\left.
+\int_{bt}^{at} \left( \frac{1}{s} \frac{\partial }{\partial s} \right)^{\frac{n-3}{2}} \left( (s)^{n-2} M_{s}(\phi_2)(x) \right)ds\right.\]
\[\left.+
\int_0^t \int_{b\nu}^{a\nu} \left( \frac{1}{s} \frac{\partial }{\partial s} \right)^{\frac{n-3}{2}} \left( (s)^{n-2} M_{s}(\phi_3)(x) \right)dsd\nu
 \right].
\]
\end{Theorem}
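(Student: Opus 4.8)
The plan is to invert the Fourier-side representation (\ref{fou}) term by term. On the Fourier side the Cauchy problem has already been solved exactly: $\widehat{u}(\xi,t)$ is a linear combination of the six multipliers $\cos(a|\xi|t)$, $\cos(b|\xi|t)$, $\frac{\sin(a|\xi|t)}{|\xi|}$, $\frac{\sin(b|\xi|t)}{|\xi|}$, $\frac{\cos(b|\xi|t)}{|\xi|^2}-\frac{\cos(a|\xi|t)}{|\xi|^2}$ and $\frac{1}{b}\frac{\sin(b|\xi|t)}{|\xi|^3}-\frac{1}{a}\frac{\sin(a|\xi|t)}{|\xi|^3}$, each paired with one of $\widehat{\phi_0},\widehat{\phi_1},\widehat{\phi_2},\widehat{\phi_3}$. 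Once the apparent singularities at the origin cancel inside the two bracketed $\phi_2$- and $\phi_3$-combinations, every multiplier is a bounded continuous function of $\xi$ depending only on $|\xi|$, so each summand is a tempered distribution whose inverse transform is, by the convolution theorem, $\phi_i$ convolved with $\mathcal{F}^{-1}$ of the corresponding multiplier. The whole task therefore reduces to identifying $\mathcal{F}^{-1}$ of each scalar multiplier and recognizing the resulting convolution as a spherical mean.

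The key tool for this identification is the Torchinsky formula stated in the Lemma above, together with its consequences (\ref{eq10})--(\ref{eq13}). These exhibit each multiplier explicitly as the Fourier transform, in the $x$ variable, of a $t$-differentiated uniform surface measure $d\sigma$ on the sphere $\partial B(0,at)$ or $\partial B(0,bt)$. The representative computation already displayed for the $\phi_1$ term shows the mechanism in full: convolving $\phi_1$ with $\frac{1}{\omega_{n-1}at}\,d\sigma$ on $\partial B(0,at)$ produces $\frac{1}{\omega_{n-1}at}\int_{\partial B(x,at)}\phi_1\,d\sigma$, which equals $\frac{\omega_n}{\omega_{n-1}}(at)^{n-2}M_{at}(\phi_1)(x)$ after passing to the normalized spherical mean, and the prefactor $\frac{\omega_n}{\omega_{n-1}2^{m+1}m!}$ collapses to $\frac{1}{(n-2)!!}$ with $m=\frac{n-3}{2}$.

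With this template I would run the remaining five terms identically. The second $\phi_1$ term repeats the computation with $a$ replaced by $b$; together with the coefficients $-\frac{b^2}{a(a^2-b^2)}$ and $\frac{a^2}{b(a^2-b^2)}$ from (\ref{fou}) these yield the two $\phi_1$ lines of the statement. The two $\phi_0$ terms are handled using (\ref{eq11}), which is simply (\ref{eq10}) differentiated once in $t$; hence the extra $\frac{\partial}{\partial t}$ and the factor $\frac1a$ (resp. $\frac1b$) in front, producing the two $\phi_0$ lines. The $\phi_2$ term uses (\ref{eq12}) and the $\phi_3$ term uses (\ref{eq13}); here the outer $\int_{bt}^{at}(\cdots)\,ds$ and $\int_0^t\int_{b\nu}^{a\nu}(\cdots)\,ds\,d\nu$ structures are inherited directly from those identities, and under the integral sign the same spherical-mean conversion turns the surface integrals into $s^{n-2}M_s(\phi_2)(x)$ and $s^{n-2}M_s(\phi_3)(x)$. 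Summing the six inverted terms and factoring out $\frac{1}{(n-2)!!(a^2-b^2)}$ reproduces the solution formula in the statement.

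The technical points to pin down are the interchange of $\mathcal{F}^{-1}$ with the $t$-derivatives in (\ref{eq11}) and with the $s$- and $\nu$-integrals in (\ref{eq12})--(\ref{eq13}), and the constant identity $\frac{\omega_n}{\omega_{n-1}2^{m+1}m!}=\frac{1}{(n-2)!!}$. Both are elementary: since every $\phi_i\in\mathcal{S}(\mathbb{R}^n)$ and each surface measure is compactly supported in $s$, the spherical means are smooth in $(x,t)$ with derivatives of at most polynomial growth, so Fubini and differentiation under the integral apply throughout, and this smoothness also delivers the $C^4$ regularity needed for a classical solution; the constant identity is the standard double-factorial evaluation of the area ratio $\omega_n/\omega_{n-1}$. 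I expect the genuine obstacle to be bookkeeping rather than analysis, namely keeping the six coefficients, the $a\leftrightarrow b$ substitutions, and the differentiation/integration orders aligned so that the assembly matches the stated formula term for term. The initial conditions (\ref{equ4}) require no separate verification: they hold on the Fourier side by the construction of $C_1,\dots,C_4$, hence for $u$ itself after inversion.
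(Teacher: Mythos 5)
Your proposal is correct and follows essentially the same route as the paper: term-by-term Fourier inversion of (\ref{fou}) using the Torchinsky lemma and the identities (\ref{eq10})--(\ref{eq13}) to recognize each multiplier as the transform of a (differentiated or integrated) spherical surface measure, with the $\phi_1$ computation serving as the template and the constant collapsing to $\frac{1}{(n-2)!!}$. Your added justifications (cancellation of the singularities in the $\phi_2$, $\phi_3$ multipliers, the convolution theorem for tempered distributions, and the interchange of derivatives and integrals with $\mathcal{F}^{-1}$) only make explicit what the paper leaves implicit.
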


Now we consider the case when $n$ is an even number. The Hadamard's method of descent (see e.g. \cite{Hadamard53}) is useful to connect with the case in the odd dimensional space $\mathbb{R}^{n+1}$. For fixed $T>0$, we choose a Schwartz function $\eta\in \mathcal{S}(\mathbb{R})$, such that $\eta(x_{n+1})=1$ for all $|x_{n+1}|\le nT$. Let us denote $$\overline{\phi_i}(x_1,x_2,...,x_n,x_{n+1})={\phi_i}(x_1,x_2,...,x_n)\eta(x_{n+1}), \ \ i=0,1,2,3.$$
It is easy to see that $\overline{\phi_i}\in \mathcal{S}(\mathbb{R}^{n+1})$. For $|x_{n+1}|\le T,t\le T$, the solution $\overline{u}(x_1,x_2,...,x_{n+1},t)$ to the Cauchy problem for $n+1$-dimensional biwave equation with initial valued functions $\overline{\phi_i},\ i=0,1,2,3$ does not depend on $x_{n+1}$. In particular, $$u(x_1,x_2,...,x_n,t)=\overline{u}(x_1,x_2,...,x_n,0,t)$$ is the solution to the $n$-dimensional biwave equation for all $|t|\le T$. Since $T$ is arbitrary, so $u$ is the solution to the Cauchy problem in even dimensional space $\mathbb{R}^n$.

\begin{Lemma}
Given a function $f:\mathbb{R}^{n+1}\to \mathbb{R}$, which does not depend on the last variable, i.e. $f(x_1,x_2,...,x_{n+1})=g(x_1,x_2,...,x_n)$, then
\[\mathcal{M}_t(f)(x,0)=\frac{2}{\omega_{n+1}}\int_{B_n(0,1)}\frac{g(x+tz)}{\sqrt{1-|z|^2}}dz.\]
\end{Lemma}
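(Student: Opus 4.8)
The plan is to compute the $(n+1)$-dimensional spherical mean directly from its definition and to exploit that $f$ is constant in its last coordinate. First I would write, following the definition of the spherical mean given above,
\[
\mathcal{M}_t(f)(x,0)=\frac{1}{\omega_{n+1}}\int_{\partial B(0,1)}f\bigl((x,0)+ty\bigr)\,d\sigma(y),
\]
where $y=(y',y_{n+1})$ runs over the unit sphere of $\mathbb{R}^{n+1}$, with $y'\in\mathbb{R}^n$, $y_{n+1}\in\mathbb{R}$ and $|y'|^2+y_{n+1}^2=1$. Since $f(z_1,\dots,z_{n+1})=g(z_1,\dots,z_n)$ ignores the last variable, the integrand collapses to $g(x+ty')$, which does not involve $y_{n+1}$.

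Next I would split the unit sphere into its upper hemisphere $\{y_{n+1}>0\}$ and lower hemisphere $\{y_{n+1}<0\}$, each of which projects bijectively onto the equatorial disk $B_n(0,1)\subset\mathbb{R}^n$. On the upper hemisphere I parametrize $y_{n+1}=\sqrt{1-|y'|^2}$ and on the lower one $y_{n+1}=-\sqrt{1-|y'|^2}$, with $y'\in B_n(0,1)$ in both cases; the integrand $g(x+ty')$ is the same function of $y'$ on the two pieces.

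The key computation is the surface measure in these graph coordinates. For a graph $y_{n+1}=h(y')=\pm\sqrt{1-|y'|^2}$ one finds $|\nabla h|^2=|y'|^2/(1-|y'|^2)$, so that
\[
d\sigma=\sqrt{1+|\nabla h|^2}\,dy'=\frac{dy'}{\sqrt{1-|y'|^2}}.
\]
Adding the two hemispheres, whose contributions coincide because the integrand does not see $y_{n+1}$, produces the factor $2$ and yields
\[
\mathcal{M}_t(f)(x,0)=\frac{2}{\omega_{n+1}}\int_{B_n(0,1)}\frac{g(x+ty')}{\sqrt{1-|y'|^2}}\,dy',
\]
which is the claimed identity after renaming $y'$ as $z$. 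I do not anticipate a genuine obstacle: the only delicate points are the graph parametrization of the sphere and the resulting Jacobian factor $1/\sqrt{1-|y'|^2}$, together with noting that the singularity at $|y'|=1$ is integrable over $B_n(0,1)$ and hence harmless, since $g$ is a Schwartz function and in particular bounded.
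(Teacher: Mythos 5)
Your proof is correct, but it reaches the identity by a different route than the paper. The paper's proof writes the $(n+1)$-dimensional spherical mean in explicit hyperspherical coordinates $\varphi_1,\dots,\varphi_n$ with $y_{n+1}=\cos\varphi_1$ and Jacobian $J=\sin^{n-1}\varphi_1\sin^{n-2}\varphi_2\cdots\sin\varphi_{n-1}$, then performs a second change of variables $r=\sin\varphi_1$, $z=(z_1,\dots,z_n)$ with Jacobian $J'$; there the factor $2$ arises because $\varphi_1\mapsto\sin\varphi_1$ is two-to-one on $[0,\pi]$ (equivalently $\cos\varphi_1=\pm\sqrt{1-r^2}$), and the weight appears as $1/\cos\varphi_1=1/\sqrt{1-|z|^2}$ coming from $dr=\cos\varphi_1\,d\varphi_1$. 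You instead split the sphere into its two hemispheres over the equatorial ball $B_n(0,1)$, parametrize each as a graph $y_{n+1}=\pm\sqrt{1-|y'|^2}$, and invoke the standard graph formula $d\sigma=\sqrt{1+|\nabla h|^2}\,dy'$, which you correctly evaluate to $dy'/\sqrt{1-|y'|^2}$. The geometric content is identical --- project the sphere onto the equatorial ball, pick up the projection Jacobian, and count the two sheets --- but your version is shorter, avoids the angular coordinates and the cancellation $JJ'=1$ entirely, and makes the origin of both the factor $2$ and the weight transparent; the paper's version is a bare-hands computation that does not presuppose the surface-measure formula for graphs. One small remark: boundedness of $g$ is not really what is at stake for convergence --- the weight $1/\sqrt{1-|y'|^2}$ is integrable on $B_n(0,1)$ and the identity holds whenever the integrals make sense; in the paper's application the $\phi_i$ are Schwartz functions, so your remark is harmless.
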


\begin{proof}
Observe that, for $\widetilde{x}=(x,0)$ and $\widetilde{y}=(y,y_{n+1})$, we have
\[\mathcal{M}_t(f)(\widetilde{ x})=\frac{1}{\omega_{n+1}}\int_{\partial B_{n+1}(0,1)}f(\widetilde{ x}+t\widetilde{y})d\sigma(\widetilde{y}).\]
We use the spherical coordinates given by
\[
\left\{ \begin{array}{l}
 y_1  = \sin \varphi _1 \sin \varphi _2 ...\sin \varphi _{n - 2} \sin \varphi _{n - 1} \sin \varphi _n,  \\
 y_2  = \sin \varphi _1 \sin \varphi _2 ...\sin \varphi _{n - 2} \sin \varphi _{n - 1} \cos \varphi _n , \\
 y_3  = \sin \varphi _1 \sin \varphi _2 ...\sin \varphi _{n - 2} \cos \varphi _{n - 1} \cos \varphi _n  ,\\
 ... \\
 y_n  = \sin \varphi _1 \cos \varphi _2 , \\
 y_{n + 1}  = \cos \varphi _1 , \\
 \end{array} \right.
\]
where $0\le\varphi_k\le \pi, k=1,2,...,n-1$ and $0\le\varphi_{n}\le 2\pi$. The Jacobian of this transformation is calculated as $$J=\sin^{n-1}\varphi_1\sin^{n-2} \varphi_2...\sin \varphi_{n-1}.$$
Therefore
$$\displaystyle\mathcal{M}_t(f)(x,0)=\frac{1}{\omega_{n+1}}\int_0^{\pi}...\int_0^{\pi}\int_0^{2\pi}g(x+ty)J d\varphi_1d\varphi_2...d\varphi_n.$$
Let us give $r=\sin\varphi_1$, and
\[
\left\{ \begin{array}{l}
 z_1  = r\sin \varphi _2 ...\sin \varphi _{n - 1} \sin \varphi _n,  \\
 z_2  = r\sin \varphi _2 ...\sin \varphi _{n - 1} \cos \varphi _n , \\
 ... \\
 z_n  = r\cos \varphi _2 .\\
 \end{array} \right.
\]
The Jacobian of above transformation is calculated by the formula
$$J'=\displaystyle\frac{1}{r^{n-1} \sin^{n-2}\varphi_2\sin^{n-3}\varphi_3...\sin \varphi_{n-1}}.$$
Finally, we obtain that
\[\begin{array}{llll} \displaystyle
\mathcal{M}_t(f)(x,0)=\frac{2}{\omega_{n+1}}\int_0^{\pi}...\int_0^{\pi}\int_0^{2\pi}\int_0^{1}g(x+tz)\frac{1}{\cos\varphi_1}J drd\varphi_2...d\varphi_n\\ \displaystyle =\frac{2}{\omega_{n+1}}\int_0^{\pi}...\int_0^{\pi}\int_0^{2\pi}\int_0^{1}g(x+tz)\frac{1}{\sqrt{1-|z|^2}}J drd\varphi_2...d\varphi_n\\
\displaystyle=\frac{2}{\omega_{n+1}}\int_0^{\pi}...\int_0^{\pi}\int_0^{2\pi}\int_0^{1}g(x+tz)\frac{1}{\sqrt{1-|z|^2}}JJ' dz_1dz_2...dz_n\\
=\displaystyle\frac{2}{\omega_{n+1}}\int_{B_n(0,1)}\frac{g(x+tz)}{\sqrt{1-|z|^2}}dz.\end{array}\]
So the lemma is proved.
\end{proof}

We use the notation $\displaystyle \widetilde{\mathcal{M}}_t(f)(x)=\frac{2}{\omega_{n+1}}\int_{B_n(0,1)}\frac{f(x+tz)}{\sqrt{1-|z|^2}}dz,$ which is called as modified spherical mean of $f$ (see e.g. \cite{Sabelfeld97, Stein03}). Applying the result of Lemma 2.4, we obtain the formula of the solution to the biwave equation in the even dimensional space $\mathbb{R}^n$:
\begin{Theorem}
The Cauchy initial value problem for the homogeneous $n$-dimensional biwave equation, where $n\ge 2$ is an even number, has the following solution
\[u(x,t)=\frac{1}{(n-1)!!(a^2-b^2)}\left[
\frac{a^2}{b} \frac{\partial}{\partial t}\left( \frac{1}{b^2 t} \frac{\partial }{\partial t} \right)^{\frac{n-2}{2}} \left( (bt)^{n-1} \widetilde{\mathcal{M}}_{bt}(\phi_0)(x) \right) \right.\]
\[ - \frac{b^2}{a} \frac{\partial}{\partial t}\left( \frac{1}{a^2 t} \frac{\partial }{\partial t} \right)^{\frac{n-2}{2}} \left( (at)^{n-1} \widetilde{\mathcal{M}}_{at}(\phi_0)(x) \right)
\]
\[+
\frac{a^2}{b} \left( \frac{1}{b^2 t} \frac{\partial }{\partial t} \right)^{\frac{n-2}{2}} \left( (bt)^{n-1} \widetilde{\mathcal{M}}_{bt}(\phi_1)(x) \right)-\frac{b^2}{a} \left( \frac{1}{a^2 t} \frac{\partial }{\partial t} \right)^{\frac{n-2}{2}} \left( (at)^{n-1} \widetilde{\mathcal{M}}_{at}(\phi_1)(x) \right)
\]
\[\left.
+\int_{bt}^{at} \left( \frac{1}{s} \frac{\partial }{\partial s} \right)^{\frac{n-2}{2}} \left( (s)^{n-1} \widetilde{\mathcal{M}}_{s}(\phi_2)(x) \right)ds\right.\]
\[\left.-
\int_0^t \int_{b\nu}^{a\nu} \left( \frac{1}{s} \frac{\partial }{\partial s} \right)^{\frac{n-2}{2}} \left( (s)^{n-1} \widetilde{\mathcal{M}}_{s}(\phi_3)(x) \right)dsd\nu
 \right].
\]
\end{Theorem}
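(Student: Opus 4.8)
The plan is to reduce the even-dimensional problem to the odd-dimensional one already solved in Theorem \ref{th3} by Hadamard's descent, and then to replace the spherical means in $\mathbb{R}^{n+1}$ by the modified spherical means in $\mathbb{R}^n$ by means of Lemma 2.4. Concretely, since $n$ is even, $N:=n+1$ is odd with $N\ge 3$, so Theorem \ref{th3} furnishes an explicit solution $\overline{u}$ of the $(n+1)$-dimensional biwave equation with data $\overline{\phi_i}(x,x_{n+1})=\phi_i(x)\eta(x_{n+1})$, expressed through the $(n+1)$-dimensional spherical means $M_s^{(n+1)}(\overline{\phi_i})$. The candidate solution is then $u(x,t)=\overline{u}(x,0,t)$, and the whole task is to evaluate the right-hand side of Theorem \ref{th3} at $x_{n+1}=0$ and recognise the result.

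First I would record the index bookkeeping produced by the substitution $n\mapsto N=n+1$ in Theorem \ref{th3}: the prefactor $1/\big((N-2)!!\,(a^2-b^2)\big)$ becomes $1/\big((n-1)!!\,(a^2-b^2)\big)$, the order of the operator $\big(\tfrac1s\partial_s\big)^{(N-3)/2}$ becomes $(n-2)/2$, and the weight $s^{N-2}$ becomes $s^{n-1}$; these are exactly the exponents and constants in the statement. Thus after the substitution every term has the required shape, carrying $M_s^{(n+1)}(\overline{\phi_i})(x,0)$ in place of the modified means.

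Next I would carry out the conversion of the means. Because the biwave operator factors into the two wave operators of speeds $a>b$, its domain of dependence at $(x,0,t)$ is the ball of radius $at$; for $t\le T$ the relevant spheres $\partial B_{n+1}\big((x,0),s\big)$ have $s\le aT$, and on the corresponding slab $\eta$ has been arranged to satisfy $\eta\equiv 1$, so there $\overline{\phi_i}$ coincides with the $x_{n+1}$-independent extension of $\phi_i$. Consequently Lemma 2.4 applies on each such sphere and gives $M_s^{(n+1)}(\overline{\phi_i})(x,0)=\widetilde{\mathcal{M}}_s(\phi_i)(x)$, the normalising constant $2/\omega_{n+1}$ being already built into $\widetilde{\mathcal{M}}$. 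Substituting these identities into the evaluated formula of Theorem \ref{th3} yields the claimed expression; it then remains only to invoke the descent argument recorded before Lemma 2.4, namely that $\overline{u}$ is independent of $x_{n+1}$ for $|t|\le T$, so that $u(x,t)=\overline{u}(x,0,t)$ genuinely solves the $n$-dimensional problem with data $\phi_0,\dots,\phi_3$, and to let $T\to+\infty$ since $T$ was arbitrary.

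The main obstacle is not the algebra but the justification of the descent itself: one must ensure that restricting the $(n+1)$-dimensional solution to $x_{n+1}=0$ produces a genuine solution of the $n$-dimensional Cauchy problem. This rests on finite propagation speed (speed $a$) together with uniqueness for the biwave Cauchy problem: within the domain of dependence the cut-off $\eta$ equals $1$, so the data are truly $x_{n+1}$-independent there, and a translation in $x_{n+1}$ leaves both the equation and, locally, the data invariant, forcing $\partial_{x_{n+1}}\overline{u}=0$ for $|t|\le T$. Once this local-in-time statement is secured, the arbitrariness of $T$ removes the time restriction and the remaining identification of terms is routine.
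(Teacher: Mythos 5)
Your proposal is essentially the paper's own proof: the authors obtain this theorem precisely by Hadamard descent from Theorem \ref{th3} combined with Lemma 2.4, and their written proof is a single sentence, so you in fact supply more justification (finite propagation speed, the role of the cut-off $\eta$, uniqueness under $x_{n+1}$-translation) than the paper does. One caveat: your claim that the substitution $n\mapsto n+1$ reproduces the stated formula exactly is not quite accurate, because descent preserves the $+$ sign on the $\phi_3$ term of Theorem \ref{th3}, whereas the statement here carries a $-$ sign on that term. This is an inconsistency internal to the paper rather than a flaw in your reasoning: by \eqref{fou} the coefficient of $\widehat{\phi_3}$ is $+\frac{1}{a^2-b^2}$ times the left-hand side of \eqref{eq13}, so the $+$ sign of Theorem \ref{th3} is the correct one, and your argument actually proves the corrected formula (with $+$ in front of the double integral against $\widetilde{\mathcal{M}}_{s}(\phi_3)$) rather than the one printed in the statement.
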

\ \\

By a similar idea with the Duhamel principle for wave equations, the solution of the Cauchy problem for the nonhomogeneous biwave equation will be given at the next theorem
\begin{Theorem}\label{th2} The solution of the equation (\ref{equ1})-(\ref{equ2}) takes the form $u = \widetilde u + v$, where $\widetilde u$  is the solution of the equation (\ref{equ3})-(\ref{equ4}) and \[\displaystyle v\left( {x,t} \right) = \int\limits_0^t {\omega \left( {x,t,\tau } \right)} d\tau, \]
 where $\omega \left( {x,t,\tau } \right)$ is the solution of the homogeneous biwave equation
\[
\left( \frac{{\partial ^2 }}{{\partial t^2 }} - a^2{\rm\Delta} \right)\left( {\frac{{\partial ^2 \omega }}{{\partial t^2 }} - b^2{\rm\Delta}\omega } \right) = 0,\,\,\,\,t > \tau,
\]
with the initial conditions
\[
\left. \omega  \right|_{t = \tau }  = 0,\left. {\frac{{\partial \omega }}{{\partial t}}} \right|_{t = \tau }  = 0,\left. {\frac{{\partial ^2 \omega }}{{\partial t^2 }}} \right|_{t = \tau }  = 0,\left. {\frac{{\partial ^3 \omega }}{{\partial t^3 }}} \right|_{t = \tau }  = f\left( { x,\tau} \right).
\]

\end{Theorem}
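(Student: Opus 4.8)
The plan is to verify, by linearity, that the stated $v$ accounts exactly for the inhomogeneity while carrying zero Cauchy data, so that adding the homogeneous solution $\widetilde u$ of (\ref{equ3})--(\ref{equ4}) furnished by Theorem~\ref{th1}, Theorem~\ref{th3} and their even-dimensional analogue recovers the full problem (\ref{equ1})--(\ref{equ2}). Writing the biwave operator as
\[
P=\left(\frac{\partial^2}{\partial t^2}-a^2\Delta\right)\left(\frac{\partial^2}{\partial t^2}-b^2\Delta\right)=\frac{\partial^4}{\partial t^4}-(a^2+b^2)\frac{\partial^2}{\partial t^2}\Delta+a^2b^2\Delta^2,
\]
I observe that $P(\widetilde u+v)=P\widetilde u+Pv=f$ will follow as soon as $Pv=f$ is established, while the initial data of $\widetilde u+v$ coincide with those of $\widetilde u$ provided $v$ and its first three $t$-derivatives vanish at $t=0$. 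Thus the whole theorem reduces to two assertions about $v(x,t)=\int_0^t\omega(x,t,\tau)\,d\tau$.

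First I would differentiate $v$ in $t$ by the Leibniz integral rule, which is legitimate because $\omega$ is $C^4$ in all of its arguments (inherited from the homogeneous theory, the Schwartz regularity of $f$ guaranteeing it). Each differentiation produces a boundary term $\partial_t^k\omega(x,t,\tau)\big|_{\tau=t}$ together with an integral of $\partial_t^{k+1}\omega$. The three homogeneous initial conditions $\omega|_{t=\tau}=\partial_t\omega|_{t=\tau}=\partial_t^2\omega|_{t=\tau}=0$ annihilate the boundary terms for $k=0,1,2$, so that
\[
\partial_t^j v=\int_0^t\partial_t^j\omega(x,t,\tau)\,d\tau,\qquad j=0,1,2,3,
\]
and in particular $v=\partial_tv=\partial_t^2v=\partial_t^3v=0$ at $t=0$, which settles the zero Cauchy data. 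The fourth differentiation is where the forcing enters: the surviving boundary term is now $\partial_t^3\omega(x,t,\tau)\big|_{\tau=t}=f(x,t)$, giving $\partial_t^4 v=f(x,t)+\int_0^t\partial_t^4\omega\,d\tau$.

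Next I would handle the spatial part. Since the integration limits do not involve $x$, the Laplacian commutes with the $\tau$-integral and with $\partial_t$; moreover the data $\omega|_{t=\tau}=\partial_t\omega|_{t=\tau}=0$ hold identically in $x$, so their $x$-Laplacians vanish as well, and consequently the mixed and pure spatial derivatives generate no boundary contributions, yielding $\Delta^2 v=\int_0^t\Delta^2\omega\,d\tau$ and $\partial_t^2\Delta v=\int_0^t\partial_t^2\Delta\omega\,d\tau$. Assembling the pieces,
\[
Pv=f(x,t)+\int_0^t\Bigl[\partial_t^4\omega-(a^2+b^2)\partial_t^2\Delta\omega+a^2b^2\Delta^2\omega\Bigr]\,d\tau=f(x,t)+\int_0^t P\omega\,d\tau,
\]
and since $P\omega=0$ for $t>\tau$ by hypothesis, the integral drops out and $Pv=f$. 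Together with the zero initial data this proves the theorem.

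The main obstacle is organizational rather than analytic: one must track precisely which of the four boundary terms survives under repeated Leibniz differentiation and confirm that the vanishing of exactly the first three initial conditions is what a fourth-order-in-time operator requires, with the term surviving at the fourth stage reproducing $f$. A secondary point worth recording is that, by the constant-coefficient (hence $t$-translation-invariant) structure of $P$, one may take $\omega(x,t,\tau)=W(x,t-\tau)$, where $W$ is the homogeneous solution with data $(0,0,0,f(\cdot,\tau))$; existence of $\omega$ and the $C^4$ smoothness used above are then inherited directly from Theorems~\ref{th1} and~\ref{th3} and their even-dimensional counterpart.
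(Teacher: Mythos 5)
Your proof is correct and follows essentially the same route as the paper's: a Duhamel-type verification that $Pv = f(x,t) + \int_0^t P\omega\,d\tau = f$, with the zero Cauchy data of $v$ ensuring that $\widetilde u + v$ inherits the initial conditions of $\widetilde u$. The only difference is that you spell out the Leibniz boundary terms and the role of each initial condition of $\omega$ explicitly, whereas the paper states the key identity as a single observation; your version is a more careful writing of the same argument.
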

\begin{proof}
We start with the observation that
\[
\frac{{\partial^4 v}}{{\partial t^4 }} - \left( {a^2  + b^2 } \right)\frac{{\partial ^2}}{{\partial t^2}}{\rm \Delta} v + a^2 b^2 {\rm \Delta}^2 v
\]
\[
\begin{array}{l}
 \displaystyle = f\left( {x,t} \right) + \int\limits_0^t {\left( {\frac{{\partial ^4 \omega }}{{\partial t^4 }} - \left( {a^2  + b^2 } \right)\frac{{\partial ^2  }}{{\partial t^2  }}{\rm \Delta} \omega + a^2 b^2{\rm \Delta}^2 \omega} \right)} d\tau  \\
 \displaystyle = f\left( {x,t} \right). \\
 \end{array}.
\]
Then, the above identity follows that
\[
\frac{{\partial ^4 u}}{{\partial t^4 }} - \left( {a^2  + b^2 } \right)\frac{{\partial ^2  }}{{\partial t^2  }}{\rm \Delta} u + a^2 b^2 {\rm \Delta}^2u = f\left( {x,t} \right).
\]
Moreover,
\[
\left. u \right|_{t = 0}  = \left. {\widetilde u} \right|_{t = 0}  + \left. v \right|_{t = 0}  = \phi _0 \left( x \right) + 0 = \phi _0 \left( x \right),
\]
\[
\left. {\frac{{\partial u}}{{\partial t}}} \right|_{t = 0}  = \left. {\frac{{\partial \widetilde u}}{{\partial t}}} \right|_{t = 0}  + \left. {\frac{{\partial v}}{{\partial t}}} \right|_{t = 0}  = \phi _1 \left( x \right) + 0 = \phi _1 \left( x \right),
\]
\[
\left. {\frac{{\partial ^2 u}}{{\partial t^2 }}} \right|_{t = 0}  = \left. {\frac{{\partial ^2 \widetilde u}}{{\partial t^2 }}} \right|_{t = 0}  + \left. {\frac{{\partial ^2 v}}{{\partial t^2 }}} \right|_{t = 0}  = \phi _2 \left( x \right) + 0 = \phi _2 \left( x \right),
\]
\[
\left. {\frac{{\partial ^3 u}}{{\partial t^3 }}} \right|_{t = 0}  = \left. {\frac{{\partial ^3 \widetilde u}}{{\partial t^3 }}} \right|_{t = 0}  + \left. {\frac{{\partial ^3 v}}{{\partial t^3 }}} \right|_{t = 0}  = \phi _3 \left( x \right) + 0 = \phi _3 \left( x \right).
\]
So the theorem is proved.
\end{proof}

\section{Example}\label{s:3}

Let us give an example demonstrating Theorem \ref{th1}.\\
\ \\
Consider the equation
\begin{equation}\label{equ6}
\left( {\frac{{\partial ^2 }}{{\partial t^2 }} - \frac{{\partial ^2 }}{{\partial x^2 }}} \right)\left( {\frac{{\partial ^2 u}}{{\partial t^2 }} - \frac{1}{4}\frac{{\partial ^2 u}}{{\partial x^2 }}} \right) = 0
\end{equation}
with the initial conditions
\begin{equation}\label{equ7}
\left. u \right|_{t = 0}  = 0,\left. {\frac{{\partial u}}{{\partial t}}} \right|_{t = 0}  = \sin x,\left. {\frac{{\partial ^2 u}}{{\partial t^2 }}} \right|_{t = 0}  = \cos x,\left. {\frac{{\partial ^3 u}}{{\partial t^3 }}} \right|_{t = 0}  = 0.
\end{equation}
The solution of the equation (\ref{equ6})-(\ref{equ7}) is given by the formula
\[
u\left( {x,t} \right) = \frac{1}{3}\left( {4\cos \left( {\frac{t}{2}} \right)\cos x - 4\cos t\cos x - \left( { - 8\sin \left( {\frac{t}{2}} \right) + \sin t} \right)\sin x} \right).
\]



\bibliographystyle{amsalpha}
\bibliography{x}

\newcommand{\nosort}[1]{}
\providecommand{\bysame}{\leavevmode\hbox to3em{\hrulefill}\thinspace}
\providecommand{\MR}{\relax\ifhmode\unskip\space\fi MR }
\providecommand{\MRhref}[2]{%
  \href{http://www.ams.org/mathscinet-getitem?mr=#1}{#2}
}
\providecommand{\href}[2]{#2}
\begin{thebibliography}{{Tor}09}

\bibitem[FN10]{Feng2010}
X.~Feng and M.~Neilan, \emph{Finite element methods for a bi-wave equation
  modeling {$d$}-wave superconductors}, J. Comput. Math. \textbf{28} (2010),
  no.~3, 331--353.

\bibitem[FN11]{Feng2011}
\bysame, \emph{Discontinuous finite element methods for a bi-wave equation
  modeling d-wave superconductors}, Math. Comput. \textbf{80} (2011), no.~275,
  1303--1333.

\bibitem[FRZ96]{Fushchych1996}
W.~I. Fushchych, O.~V. Roman, and R.~Z. Zhdanov, \emph{Symmetry reduction and
  some exact solutions of nonlinear biwave equations}, Rep. Math. Phys.
  \textbf{37} (1996), no.~2, 267--281.

\bibitem[Had53]{Hadamard53}
J.~Hadamard, \emph{Lectures on cauchy's problem in linear partial differential
  equations}, Dover Publications, New York, 1953.

\bibitem[HI04]{Hetnarski2004}
R.~B. Hetnarski and J.~Ignaczak, \emph{Mathematical theory of elasticity},
  Taylor and Francis Books Inc., 2004.

\bibitem[KC07]{Korzyuk2007}
V.~I. Korzyuk and E.~S. Cheb, \emph{The cauchy problem for a fourth-order
  equation with a bi-wave operator}, Differential Equations \textbf{43} (2007),
  688--695.

\bibitem[KKC10]{Korzyuk2010}
V.~Korzyuk, O.~Konopel'ko, and E.~Cheb, \emph{Boundary-value problems for
  fourth-order equations of hyperbolic and composite types}, Journal of
  Mathematical Sciences \textbf{171} (2010), 89--115.

\bibitem[Mus10]{Musk2010}
N.~I. Muskhelishvili, \emph{Some basic problems of the mathematical theory of
  elasticity}, Noordhoff International Publishing, 2010.

\bibitem[Som50]{Sommerfeld1950}
A.~Sommerfeld, \emph{Mechanics of deformable {B}odies. {L}ectures on
  {T}heoretical {P}hysics, {V}ol. {II}}, Academic Press Inc., New York, 1950.

\bibitem[SS97]{Sabelfeld97}
K.~K. Sabelfeld and I.~A. Shalimova, \emph{Spherical means for pdes}, VSP BV,
  1997.

\bibitem[SS03]{Stein03}
E.~M. Stein and R.~Shakarchi, \emph{Fourier analysis. an introduction.
  princeton lectures in analysis, 1}, Princeton University Press, 2003.

\bibitem[{Tor}09]{2009ARX.AP}
A.~{Torchinsky}, \emph{{The Fourier transform and the wave equation}}, ArXiv
  e-prints (2009).

\end{thebibliography}

\end{document}